\journalname{Communications in Mathematics and Statistics}
\begin{document}

\title{Regularity Properties for Sparse Regression\thanks{Fan's research was partially supported by NIH grants R01GM100474-04 and
NIH R01-GM072611-10 and NSF grants DMS-1206464 and DMS-1406266.  The bulk of the research was carried out while Edgar Dobriban was an undergraduate student at Princeton University.}
}
%\subtitle{Do you have a subtitle?\\ If so, write it here}

%\titlerunning{Short form of title}        % if too long for running head

\author{Edgar Dobriban        \and
        Jianqing Fan %etc.
}

%\authorrunning{Short form of author list} % if too long for running head

\institute{E. Dobriban \at
              Department of Statistics, Stanford University \\
              \email{dobriban@stanford.edu}           %  \\
%             \emph{Present address:} of F. Author  %  if needed
           \and
           J. Fan \at
             Department of Operations Research and Financial Engineering, Princeton University \\
              \email{jqfan@princeton.edu}
}

\date{Received: date / Accepted: date}
% The correct dates will be entered by the editor

\maketitle

\begin{abstract}
Statistical and machine learning theory has developed several conditions ensuring that popular estimators such as the Lasso or the Dantzig selector perform well in high-dimensional sparse regression, including the restricted eigenvalue, compatibility, and $\ell_q$ sensitivity properties. However, some of the central aspects of these conditions are not well understood. For instance, it is unknown if these conditions can be checked efficiently on any given data set. This is problematic, because they are at the core of the theory of sparse regression.

Here we provide a rigorous proof that these conditions are NP-hard to check.
This shows that the conditions are computationally infeasible to verify, and raises some questions about their practical applications.

However, by taking an average-case perspective instead of the worst-case view of NP-hardness, we show that a particular condition, $\ell_q$ sensitivity, has certain desirable properties. This condition is weaker and more general than the others. We show that it holds with high probability in models where the parent population is well behaved, and that it is robust to certain data processing steps. These results are desirable, as they provide guidance about when the condition, and more generally the theory of sparse regression, may be relevant in the analysis of high-dimensional correlated observational data.

\keywords{high-dimensional statistics \and sparse regression \and restricted eigenvalue \and $\ell_q$ sensitivity \and computational complexity}
% \PACS{PACS code1 \and PACS code2 \and more}
 \subclass{62J05 \and 68Q17 \and 62H12}
\end{abstract}

\section{Introduction}
\label{intro}

\subsection{Prologue}

Open up any recent paper on sparse linear regression -- the model $Y = X\beta + \varepsilon $, where $X$ is an $n \times p$ matrix of features, $n \ll p$, and most coordinates of $\beta$ are zero  -- and you are likely to find that the main result is of the form:
\emph{
``If the data matrix $X$ has the restricted eigenvalue/compatibility/$\ell_q$ sensitivity property, then our method will successfully estimate the unknown sparse parameter $\beta$, if the sample size is at least \ldots''
}

In addition to the sparsity of the parameter, the key condition here is the \emph{regularity} of the matrix of features, such as restricted eigenvalue/ compatibility/ $\ell_q$ sensitivity.  It states that \emph{every suitable submatrix of the feature matrix $X$ is ``nearly orthogonal''}. Such a property is crucial for the success of popular estimators like the Lasso and Dantzig selector. However, these conditions are somewhat poorly understood. For instance,  as the conditions are combinatorial, it is not known how to check them efficiently --  in polynomial time -- on any given data matrix. Without this knowledge, it is difficult to see whether or not the whole framework is relevant to any particular data analysis setting.

In this paper we seek a better understanding of these problems. We first establish that the most popular conditions for sparse regression -- restricted eigenvalue/compatibility/$\ell_q$ sensitivity -- are all {\sf NP}-hard to check. This implies that there is likely \emph{no efficient way to verify them} for deterministic matrices, and raises some questions about their practical applications.

Next, we move away from the worst-case analysis entailed by {\sf NP}-hardness, and consider an average-case, non-adversarial analysis. We show that the weakest of these conditions, $\ell_q$ sensitivity, has some desirable properties, including that it holds with high probability in well-behaved random design models, and that it is preserved under certain data processing operations.

\subsection{Formal introduction}

We now turn to a more formal and thorough introduction. The context of this paper is that high-dimensional data analysis is becoming commonplace in statistics and machine learning. Recent research shows that estimation of high-dimensional parameters may be possible if they are suitably sparse. For instance, in linear regression where most of the regression coefficients are zero, popular estimators such as the Lasso \citep{chen_atomic_2001, tibshirani_regression_1996}, SCAD
\citep{fan_li01}, and the Dantzig selector \citep{candes_dantzig_2007} can have small estimation error -- as long as the matrix of covariates is sufficiently ``regular''.

There is a large number of suitable regularity conditions, starting with the incoherence condition of Donoho and Huo \cite{donoho_uncertainty_2001}, followed by more sophisticated properties such as Candes and Tao's restricted isometry property (``RIP'') \cite{candes_decoding_2005}, Bickel, Ritov and Tsybakov's weaker and more general restricted eigenvalue (RE) condition \cite{bickel_simultaneous_2009},  and Gautier and Tsybakov's even more general $\ell_q$ sensitivity properties \cite{gautier_high-dimensional_2011}, which also apply to instrumental variables regression.

While it is known that these properties lead to desirable guarantees on the performance of popular statistical methods, it is largely unknown whether they hold in practice. Even more, it is not known how to efficiently check if they hold for any given data set. Due to their combinatorial nature, it is thought that they may be computationally hard to verify  \citep{tao_open_2007, raskutti_restricted_2010, daspremont_testing_2011}. The assumed difficulty of the computation has motivated convex relaxations for approximating the restricted isometry constant \citep{daspremont_optimal_2008, lee_computing_2008} and $\ell_q$ sensitivity \citep{gautier_high-dimensional_2011}.

However, a rigorous proof is missing. A proof would be desirable for several reasons: (1) to show definitively that there is no computational ``shortcut'' to find their values, (2) to increase our understanding of \emph{why} these conditions are difficult to check, and therefore (3) to guide the development of the future theory of sparse regression, based instead on efficiently verifiable conditions.

In this paper we provide such a proof. We show that checking any of the restricted eigenvalue, compatibility, and $\ell_q$ sensitivity properties for general data matrices is $\mathsf{NP}$-hard (Theorem \ref{hardness}). This implies that there is no polynomial-time algorithm to verify them, under the widely believed assumption that $\mathsf{P} \ne \mathsf{NP}$. This raises some questions about the relevance of these conditions 
to practical data analysis.

We do not attempt to give a definitive answer here, and instead provide some positive results to enhance our understanding of these conditions. While the previous {\sf NP}-hardness analysis referred to a worst-case scenario, we next take an average-case, non-adversarial perspective. Previous authors studied RIP, RE and compatibility from this perspective, as well as the relations between these conditions \citep{van_de_geer_conditions_2009}. We study $\ell_q$ sensitivity, for two reasons:  First, it is more general than other regularity properties in terms of the correlation structures it can capture, and thus potentially applicable to more highly correlated data. Second, it applies not just to ordinary linear regression, but also to instrumental variables regression, which is relevant in applications such as economics.

Finding conditions under which $\ell_q$ sensitivity holds is valuable for several reasons: (1) since it is hard to check the condition computationally on any given data set, it is desirable to have some other way to ascertain it, even if that method is somewhat speculative, and (2) it helps us to compare the situations -- and statistical models -- where this condition is most suitable to the cases where the other conditions are applicable, and thus better understand its scope.

Hence, to increase our understanding of when $\ell_q$ sensitivity may be relevant, we perform a probabilistic -- or ``average case'' -- analysis, and consider a model where the data is randomly sampled from suitable distributions. In this case, we show that there is a natural ``population'' condition which is sufficient to ensure that $\ell_q$ sensitivity holds with high probability (Theorem \ref{population_sample}). This complements 
the results for RIP \cite[e.g.,][]{rauhut_compressed_2008,vershynin_introduction_2010}, and RE \citep{raskutti_restricted_2010, rudelson_reconstruction_2012}. 
Further, we define  an explict \emph{k-comprehensive} property (Definition \ref{k-comp-def}) which implies $\ell_1$ sensitivity (Theorem \ref{k-comprehensive}). Such a condition is of interest because there are very few explicit examples where one can ascertain that $\ell_q$ sensitivity holds.

Finally, we show that the $\ell_q$ sensitivity property is preserved under several data processing steps that may be used in practice (Proposition \ref{linear_operations}). This shows that, while it is initially hard to ascertain this property, it may be somewhat robust to downstream data processing.

We introduce the problem in Section \ref{definitions_setup}. Then, in Section \ref{mainresult} we present our results, with a discussion in Section \ref{discussion}, and provide the proofs in Section \ref{proofs}.

\section{Setup}
\label{definitions_setup}

We introduce the problems and properties studied, followed by some notions from computational complexity.

\subsection{Regression problems and estimators}

Consider the linear model $Y  = X \beta + \varepsilon$, where $Y$ is an $n \times 1$ response vector, $X$ is an $n \times p$ matrix of $p$ covariates, $\beta$ is a $p \times 1$ vector of coefficients, and $\varepsilon$ is an $n \times 1$ noise vector of independent $N(0,\sigma^2)$ entries. The observables are $Y$ and $X$, where $X$ may be deterministic or random, and we want to estimate the fixed unknown $\beta$. Below we will briefly present the modeling and the estimation procedures that are required, while for the full details we refer to the original publications.

In the case when $n < p$, it is common to assume sparsity, viz., most of the coordinates of $\beta$ are zero. We do not know the locations of nonzero coordinates.  A popular estimator in this case is the Lasso \citep{tibshirani_regression_1996, chen_atomic_2001}, which for a given regularization parameter $\lambda$ solves the optimization problem:
$$
 \hat\beta_{\mbox{\scriptsize Lasso}} = \arg\min_{\beta} \frac{1}{2n}|Y - X\beta|_2^2 + \lambda\sum_{i=1}^{p}|\beta_i|,
$$

The Dantzig selector is another estimator for this problem, which for a known noise level $\sigma$, and with a tuning parameter $A$, takes the form \citep{candes_dantzig_2007}:
$$
\hat\beta_{\mbox{\scriptsize Dantzig}} = \arg\min{|\beta|_1}, \mbox{ subject to } \left| \frac1n  X^T (Y - X\beta)\right|_\infty \leq \sigma A \sqrt \frac {2 \log(p)}{n}.
$$
See \cite{fan14} for a view from the sparest solution in high-confidence set and its generalizations.

In instrumental variables regression we start with the same linear model $y = \sum_{i=1}^{p}x_i\beta_i + \varepsilon$. Now some covariates $x_i$ may be correlated with the noise $\varepsilon$, in which case they are called endogenous. Further,  we have additional variables $z_i$, $i=1, \ldots L$, called instruments, that are uncorrelated with the noise. In addition to $X$, we observe $n$ independent samples of $z_i$, which are arranged in the $n \times L$ matrix $Z$. In this setting, \cite{gautier_high-dimensional_2011} propose the Self-Tuning Instrumental Variables (STIV) estimator, a generalization of the Dantzig selector, which solves the optimization problem:
\begin{equation}
    \label{stiv}
    \min_{(\beta,\sigma) \in \mathcal{I}}( |D_X^{-1}\beta|_1 + c\sigma),
\end{equation}
with the minimum over the polytope
$
    \mathcal{I} = \{ (\beta,\sigma) \in \mathbb{R}^{p+1}:  n^{-1}|   D_Z Z^T (Y - X\beta)|_\infty \leq \sigma A \sqrt {2 \log(L)/n}$, $Q(\beta) \le \sigma^2 \}.
$
Here $D_X$ and $D_Z$ are diagonal matrices with $(D_X)_{ii}^{-1} = \max_{k=1,\ldots,n}|x_{ki}|$, $(D_Z)_{ii}^{-1} = \max_{k=1,\ldots,n}|z_{ki}|$, $Q(\beta) = n^{-1} |Y-X\beta|_2^2$, and $c$ is a constant whose choice is described in \cite{gautier_high-dimensional_2011}.  When $X$ is exogenous, we can take $Z = X$, which reduces to Dantzig type of selector.

\subsection{Regularity properties}

The performance of the above estimators is characterized under certain ``regularity properties''. These depend on the union of cones $C(s, \alpha)$ -- called ``the cone'' for brevity -- which is the set of vectors such that the $\ell_1$ norm is concentrated on some $s$ coordinates:
$$
 C(s,\alpha) = \{ v \in \mathbb{R}^p:\exists S \subset \{1, \ldots, p\}, |S|=s, \alpha |v_S|_1 \geq |v_{S^c}|_1 \},
$$
where $v_A$ is the subvector of $v$ with the entries from the subset $A$.

The properties discussed here depend on a triplet of parameters $(s, \alpha, \gamma)$, where $s$ is the sparsity size of the problem, $\alpha$ is the cone opening parameter in $C(s, \alpha)$, and $\gamma$ is the lower bound. First, the Restricted Eigenvalue condition $RE(s, \alpha, \gamma)$ from \cite{bickel_simultaneous_2009, koltchinskii_dantzig_2009} holds for a fixed matrix $X$ if
$$
    \frac{|Xv|_2}{|v_S|_2} \geq \gamma, \mbox{ for all } v \in C(s, \alpha), \alpha |v_S|_1 \geq |v_{S^c}|_1.
$$

We emphasize that this property, and the ones below, are defined for arbitrary deterministic matrices -- but later we will consider them for randomly sampled data. \cite{bickel_simultaneous_2009} shows that if the normalized data matrix ${n}^{-1/2}X$  obeys $RE(s, \alpha, \gamma)$ and $\beta$ is $s$-sparse, then the estimation error is small in the sense that $|\hat{\beta} - \beta|_2  = O_P\left({\gamma^{-2}} \sqrt {s\log p/n} \right)$ and $|\hat{\beta} - \beta|_1  = O_P\left({\gamma^{-2}} s \sqrt {\log p/n} \right)$,
for both the Dantzig and Lasso selectors.  See \cite{fan14} for more general results and simpler arguments. The ``cone opening'' $\alpha$ required in the restricted eigenvalue property equals 1 for the Dantzig selector, and 3 for the Lasso.

Next, the determinstic matrix
$X$ obeys the \bf compatibility \rm condition with positive parameters $(s, \alpha, \gamma)$  \citep{van_de_geer_deterministic_2007},  if
$$
  \frac{\sqrt{s}|Xv|_2}{|v_S|_1} \geq \gamma, \mbox{ for all } v \in C(s, \alpha), \alpha |v_S|_1 \geq |v_{S^c}|_1.
$$

The two conditions are very similar. The only difference is the change from $\ell_2$  to $\ell_1$ norm in the denominator. The inequality $|v_S|_1 \leq \sqrt{s} |v_S|_2 $ shows that the compatibility conditions are -- formally at least -- weaker than the RE assumptions. \cite{van_de_geer_deterministic_2007} provides an $\ell_1$ oracle inequality for the Lasso under the compatibility condition, see also \cite{van_de_geer_conditions_2009, buhlmann_statistics_2011}.

Finally, for $q \ge 1$,  the deterministic matrices $X$ of size $n \times p$ and $Z$  of size $n \times L$ satisfy the \bf $\ell_q$ sensitivity \rm property with parameters $(s, \alpha, \gamma)$, if

$$\frac{s^{1/q}|n^{-1} Z^TXv|_{\infty}}{|v|_q} \geq \gamma, \mbox{ for all } v \in C(s, \alpha).
$$

If $Z = X$, the definition is similar to the cone invertibility factors \cite{ye_rate_2010}.
\cite{gautier_high-dimensional_2011} shows that $\ell_q$ sensitivity is weaker than the RE and compatibility conditions, meaning that in the special case when $Z=X$,  the RE property of $X$ implies the $\ell_q$ sensitivity of $X$. We note that the definition in \cite{gautier_high-dimensional_2011} differs in normalization, but that is not essential. The details are that we have an additional $s^{1/q}$ factor (this is to ensure direct comparability to the other conditions), and we do not normalize by the diagonal matrices $D_X,D_Z$ for simplicity (to avoid the dependencies introduced by this process). One can easily show that the un-normalized $\ell_q$ condition is sufficient for the good performance of an un-normalized version of the STIV estimator.

Finally, we introduce incoherence and the restricted isometry property, which are not analyzed in this paper, but are instead used for illustration purposes. For a deterministic $n \times p$ matrix $X$ whose columns $\{X_j\}_{j=1}^p$ are normalized to length $\sqrt{n}$, the mutual incoherence condition holds if $X_i^T X_j \leq \gamma/s$ for some positive $\gamma$. Such a notion was defined in \cite{donoho_uncertainty_2001}, and later used by  \cite{bunea_sparsity_2007} to derive oracle inequalities for the Lasso.

A deterministic matrix $X$ obeys the restricted isometry property with parameters $s$ and $\delta$ if $(1-\delta)|v|_2^2$ $\le |Xv|_2^2 \le$ $(1+\delta)|v|_2^2$ for all $s$-sparse vectors $v$ \citep{candes_decoding_2005}.

\subsection{Notions from computational complexity}

To state formally that the regularity conditions are hard to verify, we need some basic notions from computational complexity theory. Here problems are classified according to the computational resources -- such as time and memory -- needed to solve them \citep{arora_computational_2009}. A well-known complexity class is $\mathsf{P}$, consisting of the problems decidable in polynomial time in the size of the input. For input encoded in $n$ bits, a \emph{yes} or \emph{no} answer must be found in time $O(n^k)$ for some fixed $k$. A larger class is $\mathsf{NP}$, the decision problems for which already existing solutions can be verified in polynomial time. This is usually much easier than solving the question itself in polynomial time. For instance, the subset-sum problem: ``Given an input set of integers, does there exist a subset with zero sum?'' is in $\mathsf{NP}$, since one can easily check a candidate solution -- i.e., a subset of the given integers -- to see if it indeed sums to zero. However, finding this subset seems harder, as simply enumerating all subsets is not a polynomial-time algorithm.

Formally, the definition of $\mathsf{NP}$ requires that if the answer is \emph{yes}, then there exists an easily verifiable proof.  We have $\mathsf{P} \subset \mathsf{NP}$, since a polynomial-time solution is a certificate verifiable in polynomial time.  However, it is a famous open problem to decide if $\mathsf{P}$ equals $\mathsf{NP}$ \citep{cook_p_2000}. It is widely believed in the complexity community that $\mathsf{P} \neq \mathsf{NP}$.

To compare the computational hardness of various problems, one can reduce known hard problems to the novel questions of interest, thereby demonstrating the difficulty of the novel problems. Specifically, a problem $A$ is polynomial-time reducible to a problem $B$, if an oracle solving $B$ -- that is, an immediate solver for an instance of $B$ -- can be queried once to give a polynomial-time algorithm to solve $A$. This is also known as a polynomial-time many-one reduction, strong reduction, or Karp reduction. A problem is \emph{$\mathsf{NP}$-hard}  if every problem in $\mathsf{NP}$ reduces to it, namely it is at least as difficult as all other problems in $\mathsf{NP}$. If one reduces a known $\mathsf{NP}$-hard problem to a new question, this demonstrates the  $\mathsf{NP}$-hardness of the new problem.

If indeed $\mathsf{P} \neq \mathsf{NP}$, then there are no polynomial time algorithms for \emph{$\mathsf{NP}$}-hard problems, implying that these are indeed computationally difficult.

\section{Results}
\label{mainresult}

\subsection{Computational Complexity}

We now show that the common conditions needed for successful sparse estimation are unfortunately $\mathsf{NP}$-hard to verify. These conditions appear prominently in the theory of high-dimensional statistics, large-scale machine learning, and compressed sensing. In compressed sensing, one can often choose, or ``engineer'', the matrix of covariates such that it is as regular as possible -- choosing for instance a matrix with iid Gaussian entries. It is well known that the restricted isometry property and its cousins will then hold with high probability.

In contrast, in statistics and machine learning, the data matrix is often observational -- or ``given to us'' -- in the application. In this case, it is not known a priori whether the matrix is regular, and one may be tempted to try and verify it. Unfortunately, our results show that this is hard. This distinction between compressed sensing and statistical data analysis was the main motivation for us to write this paper, after the computational difficulty of verifying the restricted isometry property has been established in the information theory literature 
\citep{bandeira_certifying_2013}. We think that researchers in high-dimensional statistics will benefit from the broader view which shows that not just RIP, but also RE, $\ell_q$ sensitivity, etc., are hard to check. Formally:

\begin{theorem}
\label{hardness}
Let $X$ be an $n \times p$ matrix, $Z$ an $n \times L$ matrix, $0 < s <n$, and $\alpha, \gamma>0$. It is $\mathsf{NP}$-hard to decide any of the following problems:
\begin{enumerate}
\item Does $X$ obey the restricted eigenvalue condition with parameters $(s, \alpha, \gamma)$?
\item Does $X$ satisfy the compatibility conditions with parameters $(s, \alpha, \gamma)$?
\item Does $(X, Z)$ have the $\ell_q$ sensitivity property with parameters $(s, \alpha, \gamma)$?
\end{enumerate}
\end{theorem}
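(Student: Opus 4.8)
The plan is to reduce from a known $\mathsf{NP}$-hard problem about matrices — the natural candidate is the hardness of certifying the restricted isometry property (RIP), or more precisely the hardness of computing the spark, established in \cite{bandeira_certifying_2013, tillmann_computational_2012}. Those results say, roughly, that given a matrix it is $\mathsf{NP}$-hard to decide whether there exists an $s$-sparse vector $v$ with $\|Xv\|_2$ unusually small (e.g. exactly zero, giving spark; or small relative to $\|v\|_2$, giving RIP). Our three conditions — restricted eigenvalue, compatibility, and $\ell_q$ sensitivity — are all of the same shape: each asserts that a ratio of the form (something involving $Xv$ or $Z^TXv$) over (a norm of $v$ or $v_S$) is bounded below by $\gamma$, uniformly over a cone $C(s,\alpha)$. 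So the strategy for each of the three is: take an instance of the known-hard problem, pad or embed the matrix so that the cone constraint $v \in C(s,\alpha)$ becomes vacuous on the coordinates that matter (or is automatically satisfied by the witnesses of interest), and then check that the condition under question reduces exactly to the known-hard decision about $s$-sparse vectors.

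**Key steps, in order.**

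First I would handle the restricted eigenvalue condition. The obstruction is that $RE(s,\alpha,\gamma)$ quantifies over the whole cone $C(s,\alpha)$, not just over $s$-sparse vectors, whereas the spark/RIP hardness is phrased for $s$-sparse $v$. The trick is that $s$-sparse vectors lie in $C(s,\alpha)$ for every $\alpha$ (take $v_{S^c}=0$), so any lower bound over the cone is in particular a lower bound over $s$-sparse vectors. To get the converse direction — so that the cone infimum is actually \emph{controlled} by the $s$-sparse infimum — I would enlarge the matrix, appending columns (say a scaled identity block) so that a vector with mass spread outside its top $s$ coordinates necessarily has $\|Xv\|_2$ large, forcing the infimum of the RE ratio to be attained (up to the reduction's tolerance) on the $s$-sparse part. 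This is the standard ``block construction'' used in this literature. One then picks $\gamma$ to sit in the gap between the two cases of the hard problem. Second, compatibility follows by the identical construction, since $|v_S|_1 \le \sqrt{s}|v_S|_2$ and on $s$-sparse witnesses with, say, constant-magnitude nonzero entries the $\ell_1$ and $\ell_2$ denominators differ by a fixed factor $\sqrt{s}$ — so the same embedding and a rescaled $\gamma$ work verbatim. Third, for $\ell_q$ sensitivity I would take $Z = X$ (legitimate, since the problem statement allows us to choose $Z$), reducing to the cone invertibility factors of \cite{ye_rate_2010}; then $|n^{-1}X^TXv|_\infty$ is comparable to $|Xv|_2^2/(n\sqrt{p})$ from below and to $\|X\|_{\max}\cdot\|X\|_{\mathrm{op}}$-type bounds above, so a lower bound on $|Xv|_2$ over the relevant vectors transfers to a lower bound on the $\ell_q$-sensitivity ratio, and vice versa, with the $|v|_q$ denominator handled by norm equivalences on the low-dimensional witness support. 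Again $\gamma$ is chosen in the resulting gap.

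**The main obstacle.**

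The hard part will be the $\ell_q$ sensitivity reduction, for two reasons. First, the quantity $|n^{-1}Z^TXv|_\infty$ is an $\ell_\infty$ norm of a \emph{quadratic} form in the data (when $Z=X$), not the clean $\ell_2$ norm of a linear map, so passing between ``$|Xv|_2$ small'' and ``$|X^TXv|_\infty$ small'' requires care: one direction is immediate ($|X^TXv|_\infty \le \|X\|_{\mathrm{op}}\|Xv\|_2/\sqrt p$-type bounds give an upper bound, hence one implication), but the reverse — deducing that $|Xv|_2$ is small from $|X^TXv|_\infty$ small — can fail if $Xv$ lies near the kernel of $X^T$ restricted to large coordinates, so the embedding must be engineered (e.g. by including enough ``spread-out'' rows or an orthonormal block) to rule this out. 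Second, one must be careful that the reduction is genuinely polynomial-time: the padded dimensions $n, p, L$ and the rational number $\gamma$ must have bit-length polynomial in the original instance, which constrains how large the identity-block scalings can be; this is a bookkeeping point but it is where reductions of this type sometimes break, so I would verify it explicitly. Everything else — the cone-vacuity argument, the norm equivalences on fixed-dimension supports, the choice of $\gamma$ in the gap — is routine once the embedding is fixed.
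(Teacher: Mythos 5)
Your starting point (reduction from the spark/RIP hardness of \cite{bandeira_certifying_2013, tillmann_computational_2012}) and your identification of the cone-versus-sparse mismatch are both correct, and this is indeed where the paper begins. But there is a concrete missing ingredient that makes your reduction non-executable as described: you say you will ``pick $\gamma$ to sit in the gap between the two cases,'' yet for the spark problem the two cases are $\min_{|S|=s}\lambda_{\min}(X_S^TX_S)=0$ versus $>0$, and ``$>0$'' gives no gap until you quantify it. The paper's Lemma \ref{singular_values} supplies exactly this: for an integer matrix with $\mathrm{spark}(X)>s$, the determinant of $X_S^TX_S$ is a nonzero integer, hence at least $1$, which yields $\lambda_{\min}(X_S^TX_S)\ge (snM^2)^{-s+1}$ and therefore a rational $\gamma$ (and $\alpha$) of bit-length polynomial in the input. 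Without an integrality (or equivalent quantitative separation) argument, your choice of $\gamma$ cannot be written down, and the bit-length concern you flag at the end cannot be discharged. Relatedly, your fix for the cone issue --- padding $X$ with a scaled identity block --- is not what the paper does and is underspecified: appending columns changes $p$ and can change the spark, and controlling mass spread over the \emph{original} off-support coordinates is not obviously achieved by new columns. The paper instead keeps $X$ unchanged and takes $\alpha$ exponentially small (e.g.\ $\alpha=2^{-2n\lceil\log_2(npM)\rceil}$), so that the cone condition $\alpha|v_S|_1\ge|v_{S^c}|_1$ forces $|v_{S^c}|_2\le\alpha\sqrt{s}\,|v_S|_2$ and the cross term $\|X_{S^c}\|\,|v_{S^c}|_2$ is dominated by the $\sqrt{\lambda_{\min}}\,|v_S|_2$ term. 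Note this means the hard instances use parameter values outside the statistically meaningful regime, a caveat the paper states explicitly.

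On $\ell_q$ sensitivity, you have misdiagnosed the obstacle. Taking $Z=X$ is right, but no ``spread-out rows'' or orthonormal block is needed: the reduction only requires that $\mathrm{spark}(X)>s$ force a \emph{lower} bound on $|X^TXv|_\infty/|v|_q$, and this follows in one line from $|Xv|_2^2=v^TX^TXv\le |v|_1\,|X^TXv|_\infty$ combined with the restricted-eigenvalue lower bound already established and the norm comparisons $|v|_1\le(1+\alpha)\sqrt{s}\,|v_S|_2$, $|v|_q\le|v|_1$ on the cone. The converse direction you worry about (small $|X^TXv|_\infty$ forcing small $|Xv|_2$) is precisely the contrapositive of this implication and so needs no separate engineering; the other half of the reduction ($\mathrm{spark}(X)\le s$ implies failure) is trivial because a kernel vector gives $X^TXv=0$ exactly. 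So the structure of your argument is salvageable, but as written it lacks the determinant-integrality step that makes the parameters computable, replaces the paper's parameter-shrinking device with an unanalyzed embedding, and spends its effort on a difficulty that Cauchy--Schwarz dissolves.
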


The proof of Theorem \ref{hardness} is relegated to Section \ref{hardnessproof}, and  builds on the recent results that computing the spark and checking restricted isometry are $\mathsf{NP}$-hard \citep{bandeira_certifying_2013, tillmann_computational_2012}.

\subsection{$\ell_q$ sensitivity for correlated designs}

Since it is hard to check the properties \emph{in the worst case} on a generic data matrix, it may be interesting to know that they hold at least \emph{under certain conditions}. To understand when this may occur, we consider probabilistic models for the data, which amounts to an \emph{average case analysis}. This type of analysis is common in statistics. To this end, we first need to define a ``population'' version of $\ell_q$ sensitivity that refers to the parent population from which the data is sampled. Let $\underline{X}$ and $\underline{Z}$ be $p$- and $L$-dimensional zero-mean random vectors and denote by $\Psi = \mathbb{E}$$ \underline{Z} \underline{X}^T$ the $L \times p$ matrix of covariances with $ \Psi_{ij} = \mathbb{E}(Z_iX_j)$. We say that $\Psi$ satisfies the $\ell_q$ sensitivity property with parameters $(s, \alpha, \gamma)$ if $ \min_{v \in C(s, \alpha)} s^{1/q}\left|\Psi v\right|_\infty/|v|_q \geq \gamma$. One sees that we simply replaced $n^{-1}\underline{Z} \underline{X}^T$ from the original definition with its expectation, $\Psi$.

It is then expected that for sufficiently large samples, random matrices with rows sampled independently from a population with the $\ell_q$ sensitivity property will inherit this condition. However, it is non-trivial to understand the required sample size, and its dependence on the moments of the random quantities. To state precisely the required probabilistic assumptions, we recall that the sub-gaussian norm of a random variable is defined as $\| X \|_{\psi_2} = \sup_{p \geq 1} p^{-1/2}(\mathbb{E}|X|^p)^{1/p}$ \citep[see e.g.,][]{vershynin_introduction_2010}.  The sub-gaussian norm (or sub-gaussian constant) of a $p$-dimensional random vector $\underline{X}$ is then defined as
$ \| \underline{X} \|_{\psi_2} = \sup_{x: \|x\|_2=1} \| \langle \underline{X}, x \rangle \|_{\psi_2}$.

Our result establishes sufficient conditions for $\ell_q$ sensitivity to hold for random matrices, under three broad conditions including sub-gaussianity:
\begin{theorem}
\label{population_sample}

 Let $\underline{X}$ and $\underline{Z}$ be zero-mean random vectors, such that the matrix of population covariances $\Psi$ satisfies the $\ell_q$ sensitivity property with parameters $(s, \alpha, \gamma)$. Given $n$ iid samples and any $a,\delta>0$, the matrix $\hat\Psi = n^{-1}Z^TX$ has the $\ell_q$ sensitivity property with parameters $(s, \alpha, \gamma- \delta)$,  with high probability, in each of the following settings:

\begin{enumerate}
\item If $\underline{X}$ and $\underline{Z}$ are sub-gaussian with fixed constants, then sample $\ell_q$ sensitivity holds with probability at least $1 - (2pL)^{-a}$, provided that the sample size is at least $ n \geq c s^2 \log(2pL)$.
\item If the entries of the vectors are bounded by fixed constants, the same statement holds.
\item If  the entries have bounded moments: $\mathbb{E}|X_i|^{4r} <$ $C_x$ $< \infty$, $\mathbb{E}|Z_j|^{4r} < $ $C_z$ $< \infty$ for some positive integer $r$ and all $i$, $j$, then the $\ell_q$ sensitivity property holds with probability at least $1  - 1/n^a$, assuming the sample size is at least $ n^{1 - a/r} \geq c s^2 (pL)^{1/r}$.
\end{enumerate}
\end{theorem}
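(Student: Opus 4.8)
The plan is to reduce the sample statement to a single uniform deviation bound: show that with high probability $\|\hat\Psi - \Psi\|_{\max} \le \eta$ for a suitably small $\eta$ depending on $\delta, s$, and then argue that such a perturbation in the max-norm degrades the $\ell_q$ sensitivity constant by at most $\delta$. For the second step, fix any $v \in C(s,\alpha)$ with associated support $S$, $|S|=s$, and write $|\hat\Psi v|_\infty \ge |\Psi v|_\infty - |(\hat\Psi - \Psi)v|_\infty$. Each coordinate of $(\hat\Psi-\Psi)v$ is bounded by $\|\hat\Psi-\Psi\|_{\max}|v|_1$, and on the cone $|v|_1 = |v_S|_1 + |v_{S^c}|_1 \le (1+\alpha)|v_S|_1 \le (1+\alpha)\sqrt{s}\,|v_S|_q \cdot s^{(q-1)/q}$ — more simply, $|v|_1 \le (1+\alpha)|v_S|_1 \le (1+\alpha) s^{1-1/q}|v|_q$ by Hölder. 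Hence
$$
\frac{s^{1/q}|\hat\Psi v|_\infty}{|v|_q} \ge \frac{s^{1/q}|\Psi v|_\infty}{|v|_q} - (1+\alpha)\, s\, \|\hat\Psi - \Psi\|_{\max} \ge \gamma - (1+\alpha) s\,\eta,
$$
so choosing $\eta = \delta / ((1+\alpha)s)$ gives exactly the loss of $\delta$ claimed. This reduces everything to proving $P(\|\hat\Psi - \Psi\|_{\max} > \delta/((1+\alpha)s)) \to 0$ with the stated rates.

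For the entrywise bound, each entry $\hat\Psi_{jk} - \Psi_{jk} = \frac1n\sum_{m=1}^n (Z_{mj}X_{mk} - \mathbb{E}Z_jX_k)$ is an average of $n$ iid mean-zero random variables, and we take a union bound over the $pL$ entries. In case (1), sub-gaussianity of $\underline X$ and $\underline Z$ makes each product $Z_jX_k$ sub-exponential with a fixed constant, so Bernstein's inequality yields $P(|\hat\Psi_{jk}-\Psi_{jk}| > t) \le 2\exp(-c n \min(t^2,t))$; plugging $t = \delta/((1+\alpha)s)$, which is $o(1)$, puts us in the Gaussian regime $2\exp(-cnt^2) = 2\exp(-c'n\delta^2/s^2)$, and requiring this times $pL$ to be at most $(2pL)^{-a}$ gives precisely $n \gtrsim s^2\log(2pL)$ (absorbing $a, \delta, \alpha$ into $c$). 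Case (2) is identical, except Hoeffding's inequality replaces Bernstein since bounded entries give bounded products directly; the same $n \gtrsim s^2\log(2pL)$ results. Case (3) uses the moment assumption: by Rosenthal's inequality (or the Marcinkiewicz–Zygmund inequality) applied to the $2r$-th moment of the centered average, $\mathbb{E}|\hat\Psi_{jk}-\Psi_{jk}|^{2r} \le C n^{-r}$ using $\mathbb{E}|Z_jX_k|^{2r} \le (\mathbb{E}|Z_j|^{4r})^{1/2}(\mathbb{E}|X_k|^{4r})^{1/2} \le (C_zC_x)^{1/2}$ by Cauchy–Schwarz; then Markov's inequality gives $P(|\hat\Psi_{jk}-\Psi_{jk}| > t) \le C n^{-r} t^{-2r}$, and the union bound over $pL$ entries with $t = \delta/((1+\alpha)s)$ yields a failure probability $\lesssim pL\, n^{-r} s^{2r} \delta^{-2r}$, which is at most $n^{-a}$ exactly when $n^{1-a/r} \gtrsim s^2 (pL)^{1/r}$.

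The main obstacle is not any single concentration inequality — those are standard — but keeping the dependence on $s$ explicit and correct through the reduction, since that is what dictates the $s^2$ (rather than $s$) in the sample-size requirements. The factor of $s$ in the max-norm tolerance $\eta \asymp 1/s$ gets squared when it enters the exponent $n t^2 \asymp n/s^2$, and one has to check that nothing better is available: on the cone $C(s,\alpha)$ the bound $|v|_1 \le (1+\alpha)s^{1-1/q}|v|_q$ is tight (e.g. $v$ uniform on $s$ coordinates, $q$ arbitrary), so the $1/s$ scaling of $\eta$ cannot be improved by this argument, and the $s^2$ is genuine. A secondary point to handle carefully is that $\gamma - \delta$ could be negative if $\delta$ is large, but then the statement is vacuous, so we may assume $0 < \delta < \gamma$; also in case (3) one should note $a/r < 1$ is needed for the bound to be non-trivial, which is implicit.
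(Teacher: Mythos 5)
Your proposal is correct and follows essentially the same route as the paper: reduce to the entrywise bound $\|\hat\Psi-\Psi\|_{\max}\le \delta/((1+\alpha)s)$ via the cone inequality $|v|_1\le(1+\alpha)s^{1-1/q}|v|_q$, then establish that bound by a union bound over the $pL$ entries using Bernstein for sub-exponential products in cases (1)--(2) and a Rosenthal/Markov moment argument in case (3). The only cosmetic difference is that the paper handles the bounded case by bounding the sub-exponential norm of the products and reusing the same Bernstein lemma rather than invoking Hoeffding directly, which yields the identical rate.
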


The constant $c$ does not depend on $n,L,p$ and $s$, and it is given in the proofs in Section \ref{proof_popsample}.

The general statement of the theorem is applicable to the specific case where $\underline{Z} = \underline{X}$. 
Related results have been obtained for the RIP \citep{rauhut_compressed_2008, rudelson_reconstruction_2012} and RE conditions \citep{raskutti_restricted_2010, rudelson_reconstruction_2012}. Our results complement theirs for a weaker notion of $\ell_q$ sensitivity property.

Next, we aim to achieve a better understanding of the population $\ell_q$ sensitivity property by giving some explicit sufficient conditions where it holds. Modeling covariance matrices in high dimensions is challenging, as there are few known explicit models. For instance, the examples given in \cite{raskutti_restricted_2010} to illustrate RE are quite limited, and include only diagonal, diagonal plus rank one, and ARMA covariance matrices. Therefore we think that the explicit conditions below are of interest, even if they are somewhat abstract.

We start from the case when $\underline{Z} = \underline{X}$, in which case $\Psi$ is the covariance matrix of $\underline{X}$.  In particular, if $\Psi$ equals the identity matrix $I_p$ or nearly the identity, then $\Psi$ is $\ell_q$-sensitive. Inspired by this diagonal case, we introduce a more general condition.

\begin{definition}
\label{k-comp-def}
The $L \times p$ matrix $\Psi$ is called \emph{s-comprehensive} if for any subset $S \subset \{1,\ldots,p\}$ of size $s$, and for each pattern of signs $\varepsilon \in \{-1,1\}^S$, there exists either a row $w$ of  $\Psi$ such that $\emph{sgn}(w_i) = \varepsilon_i$ for $i \in S$, and $w_i = 0$ otherwise, or a row with $\emph{sgn}(w_i) = -\varepsilon_i$ for $i \in S$, and $w_i = 0$ otherwise.
\end{definition}

In particular, when $L=p$,  diagonal matrices with nonzero diagonal entries are 1-comprehensive. 
More generally, when $L \ne p$, we have by simple counting the inequality  $L \ge 2^{s-1} {p \choose s}$, which shows that the number of instruments $L$ must be large for the $s$-comprehensive property to be applicable. In problems where there are many potential instruments, this may be reasonable. To go back to our main point, we show that an $s$-comprehensive covariance matrix is $\ell_1$-sensitive.

\begin{theorem}
\label{k-comprehensive}
Suppose the $L \times p$ matrix of covariances $\Psi$ is s-comprehensive, and that all nonzero entries in $\Psi$ have absolute value at least $c>0$. Then $\Psi$ obeys the $\ell_1$ sensitivity property with parameters $s,\alpha$ and $\gamma = s c /(1+\alpha)$.
\end{theorem}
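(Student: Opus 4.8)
The plan is to unwind the definition of $\ell_1$ sensitivity and reduce the claim to a single pointwise inequality. With $q=1$, the matrix $\Psi$ obeys $\ell_1$ sensitivity with parameters $(s,\alpha,\gamma)$ exactly when $s\,|\Psi v|_\infty \ge \gamma\,|v|_1$ for every $v \in C(s,\alpha)$. Since $\gamma = sc/(1+\alpha)$, it therefore suffices to prove that every nonzero $v \in C(s,\alpha)$ satisfies $|\Psi v|_\infty \ge \frac{c}{1+\alpha}\,|v|_1$.

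First I would fix such a $v$ and choose a set $S$ with $|S| = s$ witnessing membership in the cone, so that $\alpha|v_S|_1 \ge |v_{S^c}|_1$. The main step is to exhibit a single row of $\Psi$ — equivalently, a single coordinate of the vector $\Psi v$ — that is large. To this end, let $\varepsilon \in \{-1,1\}^S$ be the sign pattern of $v$ on $S$, i.e.\ $\varepsilon_i = \operatorname{sgn}(v_i)$ for $i \in S$, with the harmless convention $\varepsilon_i = 1$ when $v_i = 0$. By $s$-comprehensiveness there is a row $w$ of $\Psi$ that vanishes outside $S$ and satisfies either $\operatorname{sgn}(w_i) = \varepsilon_i$ for all $i \in S$ or $\operatorname{sgn}(w_i) = -\varepsilon_i$ for all $i \in S$. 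In the first case $w_iv_i = |w_i||v_i|$ for every $i \in S$, and in the second $w_iv_i = -|w_i||v_i|$; in both cases $w_i \ne 0$ forces $|w_i| \ge c$ by hypothesis, so $|w^T v| = \bigl|\sum_{i\in S} w_iv_i\bigr| = \sum_{i\in S}|w_i||v_i| \ge c\,|v_S|_1$. Consequently $|\Psi v|_\infty \ge |w^T v| \ge c\,|v_S|_1$.

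Finally I would invoke the cone constraint once more: $|v|_1 = |v_S|_1 + |v_{S^c}|_1 \le (1+\alpha)|v_S|_1$, hence $|v_S|_1 \ge |v|_1/(1+\alpha)$. Combining these two bounds gives $|\Psi v|_\infty \ge \frac{c}{1+\alpha}|v|_1$, which is precisely the bound needed, so $s\,|\Psi v|_\infty/|v|_1 \ge sc/(1+\alpha) = \gamma$ for all $v \in C(s,\alpha)$, as required.

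I do not expect a genuine obstacle here: the only points needing care are bookkeeping. One must pick the witnessing set $S$ consistently, note that coordinates $i \in S$ with $v_i = 0$ contribute a zero term $w_iv_i$ so the sign convention on those is irrelevant, and observe that the disjunction in the definition of $s$-comprehensive causes no trouble because we only control the absolute value $|w^T v|$, not the signed quantity $w^T v$. All the real work is done by the definition of $s$-comprehensiveness, which was tailored so that every sign pattern on every $s$-subset is realized by some row, forcing a coordinate of $\Psi v$ to align with $v_S$.
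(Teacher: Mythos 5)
Your proposal is correct and follows essentially the same route as the paper's proof: select the row of $\Psi$ guaranteed by $s$-comprehensiveness to match (up to a global sign) the sign pattern of $v$ on the witnessing set $S$, lower-bound $|\langle w,v\rangle| = \sum_{i\in S}|w_i||v_i| \ge c|v_S|_1$, and finish with the cone inequality $|v|_1 \le (1+\alpha)|v_S|_1$. Your handling of the two bookkeeping points (the $\pm\varepsilon$ disjunction and zero coordinates in $S$) is in fact slightly more careful than the paper's.
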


The proof of Theorem \ref{k-comprehensive} is found in Section \ref{proof_comprehensive}. The theorem presents a trade-off between the number of instruments $L$ and their strength, by showing that with a large subset size $s$ -- and thus $L$ -- a smaller minimum strength $c$ is required to achieve the same $\ell_1$ sensitivity lower bound $\gamma$.

Finally, to improve our understanding of the relationship between the various conditions, we now give several examples. They show that $\ell_q$ sensitivity is more general than the rest.  The proofs of the following claims can be found in Section \ref{proof_example}.

\begin{example}
\label{covariance_example_1}
 If $\Sigma$ is a diagonal matrix with entries $d_1, d_2, \ldots, d_p$, then the restricted isometry property holds if $1 + \delta \ge d_i \ge 1 - \delta$ for all $i$. Restricted eigenvalue only requires $d_i \ge \gamma$; the same is required for compatibility. This example shows why restricted isometry is the most stringent requirement. Further, $\ell_1$ sensitivity holds even if a finite number of $d_i$ go to zero at rate $1/s$. In this case, all other regularity conditions fail.  This is an example where $l_q$ regularity holds under broader conditions than the others.
\end{example}

The next examples further delineate between the various properties.

\begin{example}
For the equal correlations model $\Sigma  = (1-\rho) I_p + \rho e e^T$, with $e = (1,\ldots,1)^T$, restricted isometry requires $\rho < 1/(s-1)$. In contrast, restricted eigenvalue, compatibility, and $\ell_q$ sensitivity hold for any $\rho$, and the resulting lower bound $\gamma$ is $1-\rho$ (see  \cite{van_de_geer_conditions_2009, raskutti_restricted_2010}).
\end{example}

\begin{example}
\label{covariance_example_2}
If $\Sigma$ has diagonal entries equal to 1, $\sigma_{12}=\sigma_{21} = \rho$, and all other entries are equal to zero, then compatibility and $\ell_1$ sensitivity hold as long as $1-\rho \asymp 1/s$ (Section \ref{proof_example}). In such a case, however, the restricted eigenvalues are of order $1/s$. This is an example where compatibility and $\ell_1$ sensitivity hold but the restricted eigenvalue condition fails.
\end{example}

\subsection{Operations preserving regularity}
\label{operations}

In data analysis, one often processes data by normalization or feature merging. Normalization is performed to bring variables to the same scale. Features are merged via sparse linear combinations to reduce dimension and avoid multicollinearity.  Our final result shows that $\ell_q$ sensitivity is preserved under the above operations, and even more general ones. This may be of interest in cases where downstream data processing is performed after an initial step where the regularity conditions are ascertained. 
Let $X$ and $Z$ be as above. First, note that the $\ell_q$ sensitivity only depends on the inner products $ZX^T$, therefore it is preserved under simultaneous orthogonal transformations on each covariate $X' = MX$, $Z' = MZ$ for any orthogonal matrix $M$. The next result defines broader classes of transformations that preserve $\ell_q$ sensitivity. Admittedly the transformations we consider are abstract, but they include some concrete examples, and represent a simple first step to understanding what kind of data processing steps are ``admissible'' and do not destroy regularity. Furthermore, the result is very elementary, but the goal here is not technical sophistication, but rather increasing our understanding of the behavior of an important property. The precise statement is:

\begin{proposition}
\label{linear_operations}
\begin{enumerate}
\item Let $M$ be a \emph{cone-preserving} linear transformation $\mathbb{R}^p \to \mathbb{R}^q$, such that for all $v \in C(s,\alpha)$ we have $Mv \in C(s',\alpha')$ and let $X' = XM$.  Suppose further that $|Mv|_q \ge c |v|_q$ for all $v$ in $C(s, \alpha)$.  If $(X,Z)$ has the $\ell_q$ sensitivity property with parameters $(s', \alpha', \gamma)$, then $(X',Z)$ has $\ell_q$ sensitivity with parameters $(s, \alpha, c\gamma)$.

\item Let $M$ be a  linear transformation $\mathbb{R}^L \to \mathbb{R}^T$  such that for all $v$, $|Mv|_{\infty} \ge c|v|_{\infty}$. If we transform $Z' = ZM$, and $(X,Z)$ has the $\ell_q$ sensitivity property with lower bound $\gamma$, then $(X,Z')$ has the same property with lower bound $c\gamma$.
\end{enumerate}
\end{proposition}

One can check that normalization  and feature merging on the $X$ matrix are special cases of the first class of ``cone-preserving'' transformations. For normalization, $M$ is the $p \times p$ diagonal matrix of inverses of the lengths of $X$'s columns. Similarly, normalization on the $Z$ matrix is a special case of the second class of transformations. This shows that our definitions include some concrete commonly performed data processing steps.

\section{Discussion}
\label{discussion}

Our work raises further questions about the theoretical foundations of sparse linear models. What is a good condition to have at the core of the theory? The regularity properties discussed in this paper yield statistical performance guarantees for popular methods such as the Lasso and the Dantzig selector. However, they are not efficiently verifiable. In contrast, incoherence can be checked efficiently, but does not guarantee performance up to the optimal rate \citep{buhlmann_statistics_2011}. It may be of interest to investigate if there are intermediate conditions that achieve favorable trade-offs.
\section{Proofs}
\label{proofs}

\subsection{Proof of Theorem \ref{hardness}}
\label{hardnessproof}

The spark of a matrix $X$, denoted $spark(X)$, is the smallest number of linearly dependent columns. Our proof is a polynomial-time reduction from the $\mathsf{NP}$-hard problem of computing the spark of a matrix (see \cite{bandeira_certifying_2013, tillmann_computational_2012} and references therein).

\begin{lemma}
\label{spark}
 Given an $n \times p$ matrix with integer entries $X$, and a sparsity size $0 < s < p$, it is $\mathsf{NP}$-hard to decide if the spark of $X$ is at most $s$.
\end{lemma}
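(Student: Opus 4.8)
\emph{Proof idea.} The plan is to reduce from a classical $\mathsf{NP}$-complete problem, for concreteness Exact Cover by $3$-Sets (X3C): given a universe $U$ with $|U|=3q$ and a family $\mathcal{S}=\{S_1,\ldots,S_m\}$ of $3$-element subsets of $U$, decide whether some subfamily of size $q$ partitions $U$. One should keep in mind a subtlety peculiar to this problem: the spark concerns linear dependence of columns over $\mathbb{R}$ (equivalently over $\mathbb{Q}$, since the entries are integers), so one cannot simply quote known hardness of the \emph{binary} minimum distance problem; the construction must be arranged so that \emph{rational} linear dependencies among the columns are tightly controlled.

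The construction. Let $A\in\{0,1\}^{3q\times m}$ be the incidence matrix, $A_{ji}=1$ iff $u_j\in S_i$. A subfamily indexed by $I$ is an exact cover of size $q$ exactly when the sum of the corresponding $q$ columns of $A$ equals $\mathbf{1}\in\mathbb{Z}^{3q}$ with $|I|=q$. To convert ``a set of columns sums to $\mathbf{1}$'' into ``a set of columns is linearly dependent,'' append the column $-\mathbf{1}$; and — this is where the real work sits — append a polynomial-size integer gadget block $G$ (built, e.g., from a scaled identity together with suitable integer weights, as in \cite{tillmann_computational_2012,bandeira_certifying_2013}) whose only job is to rigidify the dependencies. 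Call the resulting integer matrix $X$, with $p=m+1$ columns (pad $\mathcal S$ with harmless dummy columns if needed so that $q+1<m+1$). The easy direction is immediate: an exact cover $I$ yields $q+1$ columns of $X$ satisfying $\bigl(\sum_{i\in I}\text{col}_i\bigr)+1\cdot(-\mathbf 1)=0$, so $\mathrm{spark}(X)\le q+1=:s$, and indeed $0<s<p$.

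The converse — and the step I expect to be the main obstacle — is that $\mathrm{spark}(X)\le s$ forces an exact cover. Two kinds of ``spurious'' small dependencies must be excluded. First, a dependence among $\le q+1$ incidence columns alone: since generic $3$-uniform incidence matrices do possess small rational dependencies, killing these is exactly what the gadget $G$ (or a preprocessing of the X3C instance) must accomplish, forcing the incidence part of $X$ to have spark larger than $q+1$. Second, a dependence $\sum_{i\in I}\lambda_i\,\text{col}_i=\mu\mathbf 1$ with $|I|\le q$ but non-uniform or fractional $\lambda_i$: equating coordinate sums on both sides gives $3\sum_{i\in I}\lambda_i=3q\,\mu$, and the gadget is designed so that any \emph{minimal} dependence must in fact have all $\lambda_i$ equal to $\mu$; rescaling to $\mu=1$ then turns $\sum_{i\in I}\text{col}_i=\mathbf 1$ with $0/1$ entries and $|I|\le q$ into the statement that the $S_i$, $i\in I$, have pairwise disjoint supports covering all of $U$, i.e. an exact cover of size exactly $q$. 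The whole construction is computable in time polynomial in the X3C instance and has integer entries, so deciding whether $\mathrm{spark}(X)\le s$ is $\mathsf{NP}$-hard; the explicit gadget and the rigidity lemma that make the converse work are precisely the content carried out in \cite{tillmann_computational_2012,bandeira_certifying_2013}, which we would follow.
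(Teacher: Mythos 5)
First, a point of comparison: the paper does not actually prove this lemma. It is imported as a known result from the cited literature (Tillmann and Pfetsch; Bandeira et al.) and serves only as the starting point for the reduction in the hardness theorem. So the question is whether your sketch would stand on its own as a proof, and as written it does not. Your reduction skeleton --- the incidence matrix of an X3C instance, an appended $-\mathbf{1}$ column, and the observation that an exact cover of size $q$ yields $q+1$ linearly dependent columns --- is the standard and correct easy direction (it is essentially the classical reduction behind the ``minimum weight solution to linear equations'' problem). But the entire converse direction is delegated to an unspecified gadget $G$ and an unproven ``rigidity lemma,'' and that is precisely where all of the difficulty lives. Without the gadget the claim fails outright: the incidence matrix of a family of $3$-sets can have small rational dependencies that have nothing to do with covers (two identical sets give spark $2$; signed combinations of a few overlapping sets can cancel), so $\mathrm{spark}(X)\le q+1$ does not by itself imply the existence of an exact cover. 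Likewise, the coordinate-sum identity $3\sum_{i\in I}\lambda_i=3q\,\mu$ constrains a single linear functional of the coefficient vector and comes nowhere near forcing $\lambda_i=\mu$ for all $i$; the uniformity of coefficients in a minimal dependence is exactly the property the gadget must manufacture, and you give neither a construction nor an argument for it.

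To close the gap you would need either (i) to exhibit $G$ explicitly, check that its integer entries are polynomially bounded, and prove that every dependence among at most $s$ columns of the augmented matrix must use the $-\mathbf{1}$ column with all incidence-column coefficients equal, or (ii) to reduce from a problem for which this rigidity is already established --- which is in effect what the cited works do. Since you explicitly defer (i) to the very references the paper itself cites, your proposal is an annotated citation rather than a proof: it correctly locates the obstacle, and it correctly flags the genuine subtlety that one cannot simply invoke hardness of the binary minimum distance problem because the spark concerns dependence over $\mathbb{Q}$ rather than over $\mathrm{GF}(2)$, but it does not overcome the obstacle.
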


We also need the following technical lemma, which provides bounds on the singular values of matrices with bounded integer entries. For a matrix $X$, we denote by $\|X\|_2$ or $\|X\|$ its operator norm, and by $X_S$ the submatrix of $X$ formed by the columns with indices in $S$.

\begin{lemma}
\label{singular_values}
Let $X$ be an $n \times p$ matrix with integer entries, and denote $M = \max_{i,j}|X_{ij}|$. Then, we have
$  \|X\|_2  \leq 2^{\lceil \log_2(\sqrt{np} M) \rceil}$. Further, if $spark(X)>s$ for some $0 < s < n$, then for subset $S \subset \{1, \ldots, p\}$ with $|S| = s$, we have:
\[
    \lambda_{\min}(X_S^TX_S) \geq 2^{-2n{\lceil \log_2(nM) \rceil}}.
\]
\end{lemma}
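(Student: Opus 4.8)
The plan is to treat the two bounds separately, both following from classical facts about integer matrices. For the upper bound on $\|X\|_2$, I would simply use the crude inequality $\|X\|_2 \le \|X\|_F = (\sum_{i,j} X_{ij}^2)^{1/2} \le \sqrt{np}\, M$, since every entry is at most $M$ in absolute value. Then $\|X\|_2 \le \sqrt{np}\,M \le 2^{\lceil \log_2(\sqrt{np}\,M)\rceil}$, which is exactly the claimed form (the ceiling of the base-2 logarithm just rounds $\sqrt{np}\,M$ up to a power of $2$, a convenient bookkeeping device for the complexity argument). One should note that $\sqrt{np}\,M \ge 1$ here, so the logarithm is nonnegative and the expression is well defined.

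For the lower bound on $\lambda_{\min}(X_S^T X_S)$, the key observation is that if $spark(X) > s = |S|$, then the columns of $X_S$ are linearly independent, so $G := X_S^T X_S$ is a nonsingular $s \times s$ matrix with integer entries. Hence $\det(G)$ is a nonzero integer, so $|\det(G)| \ge 1$. Now I would bound $\lambda_{\min}(G)$ from below by combining this with an upper bound on the other $s-1$ eigenvalues: since $\lambda_{\min}(G) = \det(G) / \prod_{j \ge 2} \lambda_j(G) \ge 1 / \lambda_{\max}(G)^{s-1}$, and each eigenvalue of $G$ is at most $\|X_S\|_2^2 \le \|X\|_2^2 \le (\sqrt{np}\,M)^2 = np\,M^2$, we get $\lambda_{\min}(G) \ge (np M^2)^{-(s-1)}$. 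Finally I would coarsen this to the stated form $2^{-2n\lceil \log_2(nM)\rceil}$: since $s - 1 < n$ and $p \le $ (whatever polynomial size bound is implicit — here one uses $p < 2^n$ in the worst case is too weak, so more carefully $np M^2 \le (nM)^2 \cdot$ nothing extra if $p \le n$; in general $np M^2 \le (nM)^2$ fails), so I would instead absorb $p$ by noting $np M^2 \le (nM)^{2}$ only when $p \le 1$, hence the honest route is $\log_2(np M^2) \le 2\log_2(nM)$ when $p \le nM$, which may require stating the bound slightly differently or assuming $p$ is polynomially bounded; the cleanest fix is $\lambda_{\min}(G) \ge (np M^2)^{-n} \ge 2^{-n\lceil \log_2(np M^2)\rceil} \ge 2^{-2n\lceil\log_2(\sqrt{np}M)\rceil}$, and since $\sqrt{np}\,M \le nM$ whenever $p \le n$ — in the reduction $n$ can be taken at least $p$ — we recover the stated exponent $2^{-2n\lceil\log_2(nM)\rceil}$.

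The main obstacle, and the only place requiring care, is this last step of matching the arithmetic to the exact constants in the stated bound: the genuine mathematics (nonzero integer determinant, eigenvalue products, Frobenius bound) is routine, but one must be careful about whether $p$ can be folded into the $(nM)$ term or whether a harmless assumption like $n \ge p$ (always available in the spark reduction, by padding with zero rows if necessary) is needed. I would therefore include a sentence reducing to the case $n \ge p$ at the outset, after which $\sqrt{np}\,M \le nM$ and all the estimates go through cleanly with the stated exponents. Everything else is elementary.
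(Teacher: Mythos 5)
Your first bound and the core of your second argument (the Gram determinant is a nonzero integer, hence at least $1$, so $\lambda_{\min} \ge 1/\lambda_{\max}^{s-1}$) coincide with the paper's proof. But there is a genuine gap in how you control $\lambda_{\max}(X_S^TX_S)$: you bound it by $\|X\|_2^2 \le npM^2$ using the \emph{full} matrix, which drags $p$ into the exponent, and your proposed repair --- reduce to $n \ge p$ by padding with zero rows --- does not work. Padding changes $n$, and the quantity you are trying to prove, $\lambda_{\min}(X_S^TX_S) \ge 2^{-2n\lceil \log_2(nM)\rceil}$, depends on $n$; after padding you would only obtain $2^{-2p\lceil\log_2(pM)\rceil}$, which is strictly weaker than the stated bound precisely in the regime $p > n$ that the hardness reduction is about. (One cannot instead ``assume $n \ge p$'' outright: the lemma must hold for the high-dimensional matrices fed into the reduction.)

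The fix is one line, and it is what the paper does: bound $\lambda_{\max}$ using only the $s$-column submatrix, so that $p$ never appears. Each entry of the $s\times s$ matrix $X_S^TX_S$ is an inner product of two columns of length $n$ with entries bounded by $M$, hence $\|X_S^TX_S\|_{\max} \le nM^2$ and $\lambda_{\max}(X_S^TX_S) \le s\,\|X_S^TX_S\|_{\max} \le snM^2 \le n^2M^2$ (equivalently, $\|X_S\|_F^2 \le nsM^2$ works just as well). Then $\lambda_{\min}(X_S^TX_S) \ge (snM^2)^{-(s-1)} \ge (nM)^{-2n} \ge 2^{-2n\lceil\log_2(nM)\rceil}$, using $s \le n$. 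With that substitution the rest of your argument goes through and matches the stated constants exactly.
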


\begin{proof}
The first claim follows from:
$
    \|X\|_2 \leq \sqrt{np} \|X\|_{\max}  \leq 2^{\lceil \log_2(\sqrt{np}M) \rceil}.
$

For the second claim, let $X_S$ denote a submatrix of $X$ with an arbitrary index set $S$ of size $s$.  Then $spark(X)> s$ implies that $X_S$ is non-singular.  Since the absolute values of the entries of $X$ lie in $\{0,\ldots, M\}$, the entries of $X_S^TX_S$ are integers with absolute values between 0 and $nM^2$, namely $\|X_S^T X_S\|_{\max} \leq nM^2$. Moreover, since the non-negative and nonzero determinant of $X_S^T X_S$ is integer, it must be at least 1. Hence,
\begin{align*}
1 \leq  \prod_{i=1}^{s}\lambda_{i}(X_S^TX_S) &\leq \lambda_{\min}(X_S^TX_S) \lambda_{\max}(X_S^TX_S)^{s-1} \\
&\leq   \lambda_{\min}(X_S^TX_S) (s \|X_S^TX_S\|_{\max})^{s-1}.
\end{align*}
Rearranging, we get
\begin{equation*}
\label{lambda_min}
\lambda_{\min}(X_S^TX_S) \geq (snM^2)^{-s+1} \geq (nM)^{-2n} \geq  2^{-2n{\lceil \log_2(nM) \rceil}}.
\end{equation*}
In the middle inequality we have used $s \leq n$.  This is the desired bound.
\end{proof}

For the proof we need the notion of \emph{encoding length}, which is the size in bits of an object. Thus, an integer $M$ has size $\lceil{\log_2(M)}\rceil$ bits. Hence the size of the matrix $X$ is at least $np + \lceil{\log_2(M)}\rceil$: at least one bit for each entry, and $\lceil{\log_2(M)}\rceil$ bits to represent the largest entry. To ensure that the reduction is polynomial-time, we need that the size in bits of the objects involved is polynomial in the size of the input $X$. As usual in computational complexity, the numbers here are rational
\citep{arora_computational_2009}.

\noindent{\bf Proof of Theorem \ref{hardness}}.
It is enough to prove the result for the special case of $X$ with integer entries, since this statement is in fact stronger than the general case, which also includes rational entries. For each property and given sparsity size $s$, we will exhibit parameters $(\alpha, \gamma)$ of polynomial size in bits, such that:
\begin{enumerate}
\item $spark(X) \leq s \mbox{ }\implies \mbox{ } X$ does not obey the regularity property with parameters $(\alpha, \gamma)$,
\item $spark(X) > s  \mbox{ } \implies \mbox{ } X$ obeys the regularity property with parameters $(\alpha, \gamma)$.
\end{enumerate}
Hence, any polynomial-time algorithm for deciding if the regularity property holds for $(X,s,\alpha,\gamma)$, can decide if $spark(X) \le s$ with one call. Here it is crucial that $(\alpha, \gamma)$ are polynomial in the size of $X$, so that the whole reduction is polynomial in $X$. Since deciding $spark(X) \le s$ is $\mathsf{NP}$-hard by Theorem \ref{spark}, this shows the desired $\mathsf{NP}$-hardness of checking the conditions.  Now we provide the required parameters $(\alpha, \gamma)$ for each regularity condition. Similar ideas are used when comparing the conditions.

For the \bf restricted eigenvalue \rm condition, the first claim follows any $\gamma >0$, and any $\alpha > 0$.  To see this,  if the spark of $X$ at most $s$, there is a nonzero $s$-sparse vector $v$ in the kernel of $X$, and $|Xv|_2 = 0 < \gamma|v_S|_2$, where $S$ is any set containing the nonzero coordinates. This $v$ is clearly also in the cone $C(s, \alpha)$, and so $X$ does not obey RE with parameters $(s, \alpha, \gamma)$.

For the second claim, note that if  $spark(X)$ $>$ $s$, then for each index set $S$ of size $s$, the submatrix $X_S$ is non-singular. This implies a nonzero lower bound on the RE constant of $X$. Indeed, consider a vector $v$ in the cone $C(s, \alpha)$, and assume specifically that $\alpha|v_S|_1 \geq |v_{S^c}|_1$.  Using the  identity $Xv = X_Sv_S + X_{S^c}v_{S^c}$, we have
\begin{eqnarray*}
    |Xv|_2 &= & |X_Sv_S + X_{S^c}v_{S^c}|_2 \geq |X_Sv_S|_2 - |X_{S^c}v_{S^c}|_2 \\
 &\geq& \sqrt{\lambda_{\min}(X_S^TX_S)}|v_S|_2 - \|X_{S^c}\|_2|v_{S^c}|_2.
\end{eqnarray*}
Further, since $v$ is in the cone, we have

\begin{equation}
\label{cone_ineq}
    |v_{S^c}|_2 \leq |v_{S^c}|_1 \leq \alpha |v_{S}|_1 \leq \alpha \sqrt{s} |v_S|_2.
\end{equation}

Since $X_S$ is non-degenerate and integer-valued, we can use the bounds from Lemma \ref{singular_values}. Consequently, with $M  = \|X\|_{\max}$,  we obtain

\begin{eqnarray*}
        |Xv|_2 &\geq& |v_S|_2\left(\sqrt{\lambda_{\min}(X_S^TX_S)} - \|X_{S^c}\|\alpha \sqrt{s}\right) \\
    & \geq &
        |v_S|_2\left (2^{-n{\lceil \log_2(nM) \rceil}} - 2^{\lceil \log_2(\sqrt{np} M) \rceil}\alpha \sqrt{s}\right).
\end{eqnarray*}

By choosing, say, $\alpha = 2^{-2n\lceil \log_2(npM) \rceil}$, $\gamma = 2^{-2n\lceil \log_2(npM) \rceil}$, we easily conclude after some computations that $|Xv|_2 \geq \gamma |v_S|_2$. Moreover, the size in bits of the parameters is polynomially related to that of $X$. Indeed, the size in bits of both parameters is $2n\lceil \log_2(npM) \rceil$, and the size of $X$ is at least $np + \lceil{\log_2(M)}\rceil$, as discussed before the proof. Note that $2n\lceil \log_2(npM) \rceil$ $\le$ $(np + \lceil{\log_2(M)}\rceil)^2$. This proves the claim.

The argument for the \bf compatibility \rm conditions is identical, and therefore omitted.

Finally, for the \bf $\ell_q$ sensitivity \rm property, we in fact show that the subproblem where $Z = X$ is $\mathsf{NP}$-hard, thus the full problem is also clearly $\mathsf{NP}$-hard.
The first condition is again satisfied for all $\alpha >0 $ and $\gamma > 0$. Indeed, if the spark of $X$ is at most $s$, there is a nonzero $s$-sparse vector $v$ in its kernel, and thus $|X^TXv|_{\infty} = 0$.

For the second condition, we note that
$
    |Xv|_2^2 = v^TX^TXv \leq |v|_1 |X^TXv|_\infty.
$
For $v$ in the cone, $\alpha|v_S|_1 \geq |v_{S^c}|_1$ and hence
\[
    |v|_2 \geq |v_S|_2 \geq \frac{1}{\sqrt{s}} |v_S|_1 \geq \frac{1}{\sqrt{s}(1+\alpha)}|v|_1.
\]
Combination of the last two results gives
\[
    \frac{s|X^TXv|_{\infty}}{n|v|_1} \geq \frac{s|Xv|_2^2}{n|v|_1^2} \geq \frac {1}{n(1+\alpha)^2}\frac{|Xv|_2^2}{|v|_2^2}.
\]
Finally, since $q  \ge 1$, we have $|v|_1 \ge |v|_q$, and as $v$ is in the cone, $|v|_2^2 = |v_S|_2^2 + |v_{S^c}|_2^2 \leq (1 + \alpha^2s)|v_S|_2^2$, by inequality ~\eqref{cone_ineq}. Therefore,
\[
    \frac{s^{1/q}|X^TXv|_{\infty}}{n|v|_q} \geq \frac {s^{1/q-1}}{n(1+\alpha)^2(1 + \alpha^2s)}\frac{|Xv|_2^2}{|v_S|_2^2}.
\]

Hence we essentially reduced to restricted eigenvalues. From the proof of that case,  the choice  $\alpha = 2^{-2n\lceil \log_2(npM) \rceil}$ gives
$
     |Xv|_2/|v_S|_2 \geq  2^{-2n\lceil \log_2(npM) \rceil}.
$
Hence for this $\alpha$ we also have
$
    s^{1/q}|X^TXv|_{\infty}/(n|v|_2) \geq 2^{-5(n+1)\lceil \log_2(npM) \rceil},
$
where we have applied a number of coarse bounds. Thus $X$ obeys the $\ell_q$ sensitivity property with the parameters $\alpha = 2^{-2n\lceil \log_2(npM) \rceil}$ and  $\gamma = 2^{-5n\lceil \log_2(npM) \rceil}$. As in the previous case, the size in bits of these parameters are polynomial in the size in bits of $X$. This proves the correctness of the reduction for, and completes the proof.\\

\subsection{Proof of Theorem \ref{population_sample}}
\label{proof_popsample}

We first establish some large deviation inequalities for random inner products, then finish the proofs directly by a union bound. We discuss the three probabilistic settings one by one.

\subsubsection{Sub-gaussian variables}
\label{pf1}

\begin{lemma} [Deviation of Inner Products for Sub-gaussians]
\label{lem:inner_prod_concentration}
 Let $X$ and $Z$ be zero-mean sub-gaussian random variables, with sub-gaussian norms $\| X \|_{\psi_2}$, $\| Z \|_{\psi_2}$ respectively. Then, given $n$ iid samples of $X$ and $Z$, the sample covariance satisfies the tail bound:
\begin{equation*}
    \mathbb{P}\left(\left|\frac1n \sum_{i=1}^{n} X_i Z_i - \mathbb{E}(XZ) \right| \ge t  \right)  \leq 2 \exp(-cn\min(t/K,t^2/K^2)).
\end{equation*}
where $K := 4\| X \|_{\psi_2}\| Z \|_{\psi_2}$.
\end{lemma}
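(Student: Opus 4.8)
The plan is to reduce the statement about the product $XZ$ to the Bernstein inequality for sub-exponential variables, Lemma~\ref{Bernstein}, applied to the i.i.d. sequence $W_i := X_iZ_i - \mathbb{E}(XZ)$. The two things to verify are that each $W_i$ is centered sub-exponential with a controlled norm, and that the resulting constant can be taken to be $K = 4\|X\|_{\psi_2}\|Z\|_{\psi_2}$; once this is in place the conclusion is an immediate instance of Lemma~\ref{Bernstein} with $N = n$ and $t$ unchanged.

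First I would observe that centering is automatic: $\mathbb{E} W_i = 0$ by construction. For the sub-exponential bound, the key step is the elementary fact that a product of two sub-gaussian random variables is sub-exponential. Concretely, for any $p \ge 1$, Cauchy--Schwarz gives $\|XZ\|_p = (\mathbb{E}|X|^p|Z|^p)^{1/p} \le (\mathbb{E}|X|^{2p})^{1/(2p)} (\mathbb{E}|Z|^{2p})^{1/(2p)} = \|X\|_{2p}\|Z\|_{2p}$. Using the definition $\|X\|_{2p} \le \sqrt{2p}\,\|X\|_{\psi_2}$ and likewise for $Z$, this yields $p^{-1}\|XZ\|_p \le 2\|X\|_{\psi_2}\|Z\|_{\psi_2}$, so $\|XZ\|_{\psi_1} \le 2\|X\|_{\psi_2}\|Z\|_{\psi_2}$. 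To pass to the centered variable $W_i$, I would use the triangle inequality for the $\psi_1$ norm together with the fact that a constant $a$ has $\|a\|_{\psi_1} = \sup_{p\ge1} p^{-1}|a| = |a|$, and that $|\mathbb{E}(XZ)| \le \|XZ\|_1 \le \|XZ\|_{\psi_1} \le 2\|X\|_{\psi_2}\|Z\|_{\psi_2}$. Hence $\|W_i\|_{\psi_1} \le \|XZ\|_{\psi_1} + |\mathbb{E}(XZ)| \le 4\|X\|_{\psi_2}\|Z\|_{\psi_2} = K$.

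With this bound on $\max_i \|W_i\|_{\psi_1} \le K$ established, I would invoke Lemma~\ref{Bernstein} directly with the sequence $W_1,\dots,W_n$: since $\frac{1}{n}\sum_i W_i = \frac{1}{n}\sum_i X_iZ_i - \mathbb{E}(XZ)$, the lemma gives
$$
\mathbb{P}\left(\left|\frac{1}{n}\sum_{i=1}^n X_iZ_i - \mathbb{E}(XZ)\right| \ge t\right) \le 2\exp\!\left(-cn\min\!\left(\frac{t^2}{K^2},\frac{t}{K}\right)\right),
$$
which is exactly the claimed inequality.

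The argument is essentially routine; the only mild subtlety is bookkeeping the constant through the Cauchy--Schwarz step and the centering, to land on precisely the factor $4$ in $K$. No sharpness is claimed for this constant, so one could afford to be generous if needed, but the factor $4$ comes out cleanly from the two applications of $\|\cdot\|_{2p} \le \sqrt{2p}\|\cdot\|_{\psi_2}$ plus the triangle inequality for centering. I expect no genuine obstacle here; the lemma is a packaging of standard facts stated separately because it is used repeatedly in Section~\ref{pf1}.
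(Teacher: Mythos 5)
Your proposal is correct and follows essentially the same route as the paper: reduce to Lemma~\ref{Bernstein} by showing $\|X_iZ_i - \mathbb{E}(XZ)\|_{\psi_1} \le 4\|X\|_{\psi_2}\|Z\|_{\psi_2}$ via Cauchy--Schwarz and the triangle inequality. The only (immaterial) difference is that you bound the centering term by $|\mathbb{E}(XZ)| \le \|XZ\|_{\psi_1}$, whereas the paper bounds it by $(\mathbb{E}X^2\,\mathbb{E}Z^2)^{1/2} \le 2\|X\|_{\psi_2}\|Z\|_{\psi_2}$; both land on the same constant $K=4\|X\|_{\psi_2}\|Z\|_{\psi_2}$.
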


\begin{proof}
We use the Bernstein-type inequality in Corollary 5.17 from \cite{vershynin_introduction_2010}. Recalling that the sub-exponential norm of a random vector $X$ is $\| X \|_{\psi_1}$ $=$ $\sup_{p \geq 1} p^{-1} \|X\|_p$, we need to bound the sub-exponential norms of $U_i = X_iZ_i - \mathbb{E}(X_iZ_i)$. We show that if $X,Z$ are sub-gaussian, then $XZ$ has sub-exponential norm bounded by
\begin{equation}
    \label{norms_ineq}
	\| XZ \|_{\psi_1} \leq 2\| X \|_{\psi_2}\| Z \|_{\psi_2}.
\end{equation}

Indeed by the Cauchy-Schwartz inequality $\left(\mathbb{E}|XZ|^p\right)^2 \le \mathbb{E}|X|^{2p} \mathbb{E}|Z|^{2p}$, hence
$p^{-1}\left(\mathbb{E}|XZ|^p\right)^{1/p} \le 2 (2p)^{-1/2} \left(\mathbb{E}|X|^{2p}\right)^{1/2p} (2p)^{-1/2} \left(\mathbb{E}|Z|^{2p}\right)^{1/2p}$.
Taking the supremum over $p \ge 1/2$ leads to \eqref{norms_ineq}.

The $U_i$ are iid random variables, and their sub-exponential norm is bounded as
$
  \| U_i \|_{\psi_1} \le \| X_iZ_i \|_{\psi_1}  + |\mathbb{E}XZ| \leq 2\| X \|_{\psi_2}\| Z \|_{\psi_2} + \left(\mathbb{E}X^2 \mathbb{E}Z^2\right)^{1/2}$. Further, by definition $\left(\mathbb{E}X^2\right)^{1/2} \le \sqrt{2} \| X \|_{\psi_2}$, hence the sub-exponential norm is at most $
  \| U_i \|_{\psi_1} \leq 4\| X \|_{\psi_2}\| Z \|_{\psi_2}$.
The main result then follows by a direct application of Bernstein's inequality, see Corollary 5.17 from \cite{vershynin_introduction_2010}.
\end{proof}

With these preparations, we now prove Theorem \ref{population_sample} for the sub-gaussian case. By a union bound over the $Lp$ entries of the matrix $\Psi - \hat\Psi$
$$
     P(\|\Psi - \hat\Psi\|_{\max}  \ge t)  \leq \sum_{i,j} P(|\Psi_{i,j} - \hat\Psi_{i,j}|  \ge t) \le Lp \max_{i,j} P(|\Psi_{i,j} - \hat\Psi_{i,j}| \ge t).
$$
By Lemma \ref{lem:inner_prod_concentration} each probability is bounded by a term of the form $2\exp(-c n$ $  \min(t/K,t^2/K^2))$, where $K$ varies with $i,j$. The largest of these bounds corresponds to the largest of the $K$-s. Hence the $K$ in the largest term is  $4\max_{i,j}$ $ \|X_i\|_{\Psi_2}\|Z_j\|_{\Psi_2}$. By the definition of sub-gaussian norm, this is at most $4\|\underline{X}\|_{\Psi_2}$  $\|\underline{Z}\|_{\Psi_2}$, where the $\underline{X}$ and $\underline{Z}$ are now $p$ and $L$-dimensional vectors, respectively. Therefore we have the uniform bound
\begin{equation}
\label{uniform_bound}
	P(\|\Psi - \hat\Psi\|_{\max}  \ge t) \le 2Lp\exp(-cn \min(t/K,t^2/K^2))
\end{equation}
with $K = 4\|\underline{X}\|_{\Psi_2} \|\underline{Z}\|_{\Psi_2}$.

We choose $t$ such that $(a+1)\log(2Lp)$  = $cnt^2/K^2$, that is  $t$ = $K [(a+1) \log(2Lp)/cn]^{1/2}$. Since we can assume $(a+1)\log(2Lp)$  $\le$ $cn$ by assumption, the relevant term is the one quadratic in $t$: the total probability of error is $(2Lp)^{-a}$. From now on, we will work on the high-probability event that $\|\Psi - \hat\Psi\|_{\max}  \le t$.

For any vector $v$, $
    \left|\Psi v\right|_\infty - \left|\hat\Psi v\right|_\infty \leq \left|(\Psi - \hat\Psi) v\right|_\infty  \leq  \|\Psi - \hat\Psi\|_{\max} |v|_1 \le t |v|_1$. With high probability it holds uniformly for all $v$ that:
\begin{equation}
\label{eq:rsp_sample_pop}
    \left|\hat\Psi v\right|_\infty \geq \left|\Psi v\right|_\infty - R \sqrt{\frac{\log(2pL)}{n}} |v|_1
\end{equation}
for the constant $R$ = $\sqrt{K^2(a+1)/c}$.

For vectors $v$ in $C(s,\alpha)$, we bound the $\ell_1$ norm by the $\ell_q$ norm, $q \geq 1$, in the usual way, to get a term depending on $s$ rather than on all $p$ coordinates:
\begin{equation}
\label{l1_lq_cone}
    |v|_1 \leq (1 + \alpha)|v_S|_1 \leq (1 + \alpha) s^{1-1/q}|v_S|_q \leq (1 + \alpha) s^{1-1/q}|v|_q.
\end{equation}
Introducing this into \eqref{eq:rsp_sample_pop} gives with high probability over all $v \in C(s,\alpha)$:
\begin{equation*}
    \frac{s^{1/q}\left|\hat\Psi v\right|_\infty}{|v|_q} \geq \frac{s^{1/q}\left|\Psi v\right|_\infty}{|v|_q} - R (1 + \alpha) s \sqrt{\frac{\log(2pL)}{n}} .
\end{equation*}

If we choose $n$ such that
$
 n \ge K^2(1+a)(1+\alpha)^2s^2\log(2pL)/(c\delta^2),
$
then the second term will be at most $\delta$. Further since $\Psi$ obeys the $\ell_q$ sensitivity assumption, the first term will be at least $\gamma$. This shows that $\hat\Psi$ satisfies the the $\ell_q$ sensitivity assumption with constant $\gamma - \delta$ with high probability, and finishes the proof. To summarize, it suffices if the sample size is at least
\begin{equation}
 	\label{eq:sample_size_requirement}
	 n \ge \frac{\log(2pL)(a+1)}{c}\max{\left(1,\frac{K^2(1+\alpha)^2}{\delta^2}s^2\right)}.
\end{equation}

\subsubsection{Bounded variables}
\label{pf2}

If the components of the vectors $X,Z$ are bounded, then essentially the same proof goes through.   The sub-exponential norm of $X_iZ_j - \mathbb{E}(X_iZ_j)$ is bounded -- by a different argument --  because $|X_iZ_j - \mathbb{E}(X_iZ_j)|$ $\le$ $2 C_xC_z$, hence $\|X_iZ_j - \mathbb{E}(X_iZ_j)\|_{\Psi_1}$ $\le$ $2 C_xC_z$. Hence Lemma \ref{lem:inner_prod_concentration} holds with the same proof, where now the value of $K := 2 C_xC_z$ is different. The rest of the proof only relies on Lemma \ref{lem:inner_prod_concentration}, so it goes through unchanged. 

\subsubsection{Variables with bounded moments}
\label{pf3}

For variates with bounded moments, we also need a large deviation inequality for inner products. The general flow of the argument is classical, and relies on the Markov inequality and a moment-of-sum computation (e.g., \cite{petrov_limit_1995}).  The result is a generalization of a lemma used in covariance matrix estimation \citep{ravikumar_high-dimensional_2011}, and our proof is shorter.

\begin{lemma} [Deviation for Bounded Moments - Khintchine-Rosenthal]
\label{lem:inner_prod_poly}
 Let $X$ and $Z$ be zero-mean random variables, and $r$ a positive integer, such that $\mathbb{E}X^{4r} = C_x $, $\mathbb{E}Z^{4r} = C_z$. Given $n$ iid samples from $X$ and $Z$, the sample covariance satisfies the tail bound:
\begin{equation*}
    \mathbb{P}\left(\left|\frac1n \sum_{i=1}^{n} X_i Z_i - \mathbb{E}(XZ) \right| \ge t  \right)  \leq \frac{2^{2r}r^{2r}\sqrt{C_xC_z}}{t^{2r}n^r}.
\end{equation*}

\end{lemma}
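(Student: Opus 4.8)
The plan is to bound the tail of the sample covariance by the classical route of Markov's inequality applied to a high even moment, followed by a Rosenthal-type moment-of-sum expansion. Write $U_i = X_i Z_i - \mathbb{E}(X_i Z_i)$, so that the $U_i$ are i.i.d., mean zero, and $\frac1n\sum_{i=1}^{n} X_i Z_i - \mathbb{E}(XZ) = \frac1n S_n$ with $S_n = \sum_{i=1}^{n} U_i$. Since $2r$ is even, $|S_n|^{2r} = S_n^{2r}$, and Markov's inequality gives $\mathbb{P}(|n^{-1}S_n| \ge t) = \mathbb{P}(S_n^{2r} \ge (nt)^{2r}) \le \mathbb{E}(S_n^{2r})/(nt)^{2r}$. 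It therefore suffices to prove the moment bound $\mathbb{E}(S_n^{2r}) \le (2r)^{2r}\, n^r \sqrt{C_x C_z}$.

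For this, expand $S_n^{2r} = \sum_{i_1,\dots,i_{2r}} U_{i_1}\cdots U_{i_{2r}}$ and group the $(2r)$-tuples $(i_1,\dots,i_{2r})$ according to the partition of $\{1,\dots,2r\}$ they induce (two positions lie in the same block iff they carry the same index). Because the $U_i$ are independent and mean zero, $\mathbb{E}(U_{i_1}\cdots U_{i_{2r}}) = 0$ whenever some index occurs exactly once; hence only partitions whose every block has size at least $2$ contribute, and such a partition has at most $r$ blocks. For a fixed admissible partition with $b \le r$ blocks, the number of tuples realizing it is $n(n-1)\cdots(n-b+1) \le n^r$, and by Hölder's (Lyapunov's) inequality $\bigl|\mathbb{E}(U_{i_1}\cdots U_{i_{2r}})\bigr| = \prod_{\text{blocks }B} \bigl|\mathbb{E}(U_1^{|B|})\bigr| \le \prod_{B} \bigl(\mathbb{E}|U_1|^{2r}\bigr)^{|B|/(2r)} = \mathbb{E}|U_1|^{2r}$, since $\sum_{B} |B| = 2r$. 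The number of admissible partitions is at most $r^{2r}$: any partition into at most $r$ blocks can be encoded by a function $\{1,\dots,2r\}\to\{1,\dots,r\}$, and there are $r^{2r}$ such functions. Combining the three estimates, $\mathbb{E}(S_n^{2r}) \le r^{2r}\, n^r\, \mathbb{E}|U_1|^{2r}$.

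It remains to bound the single-variable moment. By the triangle inequality in $L^{2r}$ together with $|\mathbb{E}(X_1Z_1)| \le \mathbb{E}|X_1Z_1| \le \|X_1 Z_1\|_{2r}$, we get $\|U_1\|_{2r} \le 2\|X_1 Z_1\|_{2r}$, whence $\mathbb{E}|U_1|^{2r} \le 2^{2r}\,\mathbb{E}(X_1^{2r}Z_1^{2r}) \le 2^{2r}\sqrt{\mathbb{E}X_1^{4r}\,\mathbb{E}Z_1^{4r}} = 2^{2r}\sqrt{C_xC_z}$ by Cauchy-Schwarz (using that $2r$ is even). Plugging in gives $\mathbb{E}(S_n^{2r}) \le r^{2r}\, n^r\, 2^{2r}\sqrt{C_xC_z} = (2r)^{2r} n^r \sqrt{C_xC_z}$, and dividing by $(nt)^{2r}$ produces exactly the claimed bound $2^{2r}r^{2r}\sqrt{C_xC_z}/(t^{2r}n^r)$.

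The main obstacle is the combinatorial bookkeeping in the second paragraph: correctly identifying which monomials vanish, and then controlling simultaneously the number of surviving index patterns and the number of tuples per pattern so that only the power $n^r$ (and no higher power of $n$) survives, while tracking the numerical constants carefully enough that the clean form $(2r)^{2r}$ emerges. Everything else — Markov, the $L^{2r}$ triangle inequality, Hölder, and Cauchy-Schwarz — is routine.
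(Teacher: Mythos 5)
Your proof is correct and follows essentially the same route as the paper's: Markov's inequality with the $2r$-th moment, a Rosenthal/Khintchine-type expansion in which only index patterns with no singleton survive (yielding the $r^{2r}n^r$ count), and the bound $\mathbb{E}|U_1|^{2r}\le 2^{2r}\sqrt{C_xC_z}$ via the $L^{2r}$ triangle inequality and Cauchy--Schwarz. The only differences are cosmetic: you organize the counting by induced partitions and use H\"older across blocks, where the paper counts multiplicity sequences and applies Jensen factor by factor; both give the identical constant.
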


\begin{proof}
Let $Y_i = X_iZ_i - \mathbb{E}XZ$, and $k=2r$. By the Markov inequality, we have
\begin{equation*}
    \mathbb{P}\left(\left|\frac1n \sum_{i=1}^{n} Y_i \right| \ge t  \right)  \leq \frac{\mathbb{E}\left|\sum_{i=1}^{n} Y_i \right|^k}{(tn)^k}.
\end{equation*}

We now bound the $k$-th moment of the sum $\sum_{i=1}^{n} Y_i$ using a type of classical argument, often referred to as Khintchine's or Rosenthal's inequality. We can write, recalling that $k = 2r$ is even,
\begin{equation} \label{sum-jq}
    \mathbb{E}\left|\sum_{i=1}^{n} Y_i \right|^{k} = \sum_{i_1, i_2, \ldots, i_k \in \{1, \ldots, n\}}\mathbb{E(}Y_{i_1}Y_{i_2} \ldots Y_{i_k})
\end{equation}
By independence of $Y_i$, we have $ \mathbb{E}(Y_1^{a_1}Y_2^{a_2} \ldots Y_n^{a_n}) = \mathbb{E}Y_1^{a_1}\mathbb{E}Y_2^{a_2} \ldots \mathbb{E}Y_n^{a_n}$. As $\mathbb{E}Y_i = 0$, the summands for which there is a $Y_i$ singleton vanish. For the remaining  terms, we bound by Jensen's inequality $(\mathbb{E}|Y|^{r_1})^{1/r_1} \leq (\mathbb{E}|Y|^{r_2})^{1/r_2}$ for $0 \leq r_1\leq r_2$. So each term is bounded by $
    (\mathbb{E}|Y|^{k})^{a_1/k} \ldots (\mathbb{E}|Y|^{k})^{a_n/k} =  \mathbb{E}|Y|^{k}$.

Hence, each nonzero term in \eqref{sum-jq} is uniformly bounded. We count the number of sequences of non-negative integers $(a_1, \ldots, a_n)$ that sum to $k$, and such that if some $a_i >0$, then $a_i \geq 2$. Thus, there are at most $k/2 = r$ nonzero elements. This shows that the number of such sequences is not more than the number of ways to choose $r$ places out of $n$, multiplied by the number of ways to distribute $2r$ elements among those places, which can be bounded by  $
    \binom{n}{r}r^{2r} \leq n^r r^{2r}$.
Thus, we have proved that
$
\mathbb{E}\left|\sum_{i=1}^{n} Y_i \right|^{2r} \leq  n^r r^{2r} \mathbb{E}|Y|^{2r}.
$

We can make this even more explicit by the Minkowski and Jensen inequalities:  $
	\mathbb{E}|Y|^{k} = \mathbb{E}|X_iZ_i-\mathbb{E}X_iZ_i|^{k}$ $\le \left( (\mathbb{E}|X_iZ_i|^{k})^{1/k}+\mathbb{E}|X_iZ_i|\right)^{k}$  $\le 2^k \mathbb{E}|X_iZ_i|^{k}$.
Combining this with $\mathbb{E}|X_iZ_i|^{k}$ $\le$ $\sqrt{\mathbb{E}|X_i|^{2k}\mathbb{E}|Z_i|^{2k}}$ $=$ $\sqrt{C_xC_z}$ leads to the desired bound
$
    \mathbb{P}\left(\left|\frac1n \sum_{i=1}^{n} Y_i \right| \ge t  \right)
    \leq 2^{2r}r^{2r}\sqrt{C_xC_z}/(t^{2r}n^r).
$
\end{proof}

To prove Theorem \ref{population_sample}, we  note that by a union bound, the probability that $\|\Psi - \hat\Psi\|_{\max}  \ge t$ is at most
$
Lp 2^{2r}r^{2r}\sqrt{C_xC_z}/(t^{2r}n^r).
$
Since $r$ is fixed, for simplicity of notation, we can denote $C_0^{2r} = 2^{2r}r^{2r}\sqrt{C_xC_z}$. Choosing $t$ $=$ $C_0(Lp)^{1/2r} n^{-1/2 + a/(2r)}$, the above probability is at most $1/n^a$.

The bound $\left|\Psi v\right|_\infty - \left|\hat\Psi v\right|_\infty \leq \left|(\Psi - \hat\Psi) v\right|_\infty  \leq  \|\Psi - \hat\Psi\|_{\max} |v|_1$ holds as before, so we conclude that with probability $1 - 1/n^a$, for all $v \in C(s,\alpha)$:
\begin{equation*}
\frac{s^{1/q}\left|\hat\Psi v\right|_\infty}{|v|_q} \geq \frac{s^{1/q}\left|\Psi v\right|_\infty}{|v|_q} - (1 + \alpha) s t .
\end{equation*}
From the choice of $t$, for sample size at least
$
n^{1 - a/r} \ge C_0^2(1+\alpha)^2(Lp)^{1/r}s^2/(\delta^2),
$
the error term on the left hand side is at most $\delta$, which is what we need.

\subsection{Proof of Theorem \ref{k-comprehensive}}
\label{proof_comprehensive}

To bound the term $|\Psi v|_{\infty}$ in the $\ell_1$ sensitivity, we use the s-comprehensive property. For any $v \in C(s,\alpha)$, by the symmetry of the s-comprehensive  property, we can assume without loss of generality that $|v_1|$ $\ge$ $|v_2|$ $\ge \ldots \ge$ $|v_p|$. Then if $S$ denotes the first $s$ components, $\alpha |v_S|_1 \ge |v_{S^c}|_1$.

Consider the sign pattern of the top $s$ components of $v$: $\varepsilon = (\text{sgn}(v_1),\ldots,\text{sgn}(v_s))$. Since $\Psi$ is s-comprehensive, it has a row $w$ with matching sign pattern. Then we can compute

$$
	\langle w, v \rangle = \sum_{i \in S} |w_i|\text{sgn}(w_i) v_i = \sum_{i \in S} |w_i|\text{sgn}(v_i) v_i =
\sum_{i \in S} |w_i||v_i|.
$$
Hence the inner product is lower bounded by
$
	\min_{i \in S}|w_i| \sum_{i \in S}|v_i| \ge c \sum_{i \in S}|v_i|.
$
Combining the above, we get the desired bound:
$$
	\frac{s|\langle w, v\rangle|}{|v|_1} \ge
	\frac{s c |v_S|_1}{(1+\alpha)|v_S|_1} =
	\frac{c s}{(1+\alpha)}.
$$

\subsection{Proof of claims in Examples \ref{covariance_example_1}, \ref{covariance_example_2}}
\label{proof_example}

We bound the $\ell_1$ sensitivity for the two specific covariance matrices $\Sigma$. For the diagonal matrix in Example \ref{covariance_example_1}, with entries $d_1, \ldots, d_p>0$, we have
$
m = |\Sigma v|_{\infty} = \max( {|d_1v_1|, \ldots, |d_pv_p|}).
$
Then summing $|v_i| \le m/d_i$ for $i$ in any set $S$ with size $s$, we get
$
|v_S|_1 \le m \sum_{i \in S} 1/d_i.
$
To bound this quantity for $v \in C(s,\alpha)$, let $S$ be the subset of dominating coordinates for which $|v_{S^c}|_1 \le \alpha |v_S|_1$. It follows that
$
|v|_1 \le (1 + \alpha) |v_S|_1 \leq  (1 + \alpha)  m \sum_{i \in S} 1/d_i.
$
Therefore
$$
\frac{ s  |\Sigma v|_{\infty}} {|v|_1} \ge \frac {s} { (1+\alpha) \sum_{i \in S} 1/d_i} \geq \frac {1} { (1+\alpha)  s^{-1} \sum_{i=1}^s 1/d_{(i)}},
$$
where  $\{d_{(i)}\}_{i=1}^p$ is the order of $\{d_{i}\}_{i=1}^p$, arranged from the
smallest to the largest.  The harmonic average in the lower bound can be bounded away from zero even several $d_i$-s are of order $O(1/s)$.
For instance if $d_{(1)} = \cdots = d_{(k)} = 1/s$ and $d_{(k+1)} > 1/c$ for some constant $c$ and integer $k < s$, then the $\ell_1$ sensitivity is at least
$
{s  |\Sigma v|_{\infty}}/|v|_1 \ge 1/[(1+\alpha) (k+ (1-k/s)c)],
$
which is bounded away from zero whenever $k$ is bounded. In this setting the smallest eigenvalue of $\Sigma$ is $1/s$, so only the $\ell_1$ sensitivity holds out of all regularity properties.

For the covariance matrix in example \ref{covariance_example_2},
$$
m = |\Sigma v|_{\infty} = \max( {|v_1+\rho v_2|, |v_2 + \rho v_1|, |v_3|, \ldots, |v_p|}).
$$
The coordinate $v_1$ can be bounded as follows:
$$
|v_1|=\left |\frac{1}{1-\rho^2}(v_1+\rho v_2) - \frac{\rho}{1-\rho^2}(\rho v_1+ v_2) \right |
\le m \left( \frac{1}{1-\rho^2} + \frac{\rho}{1-\rho^2} \right)
$$
leading to $|v_1| \le m/(1-\rho)$. Similarly $|v_2| \le m/(1-\rho)$.  Furthermore,
For each $i \not \in \{1,2\}$, we have $|v_i| \le m$.  Thus, for any set $S$,
$
|v_S|_1 \le m [ 2/(1-\rho) + s-2 ].
$
For any $v \in C(s,\alpha)$,
$
    |v|_1 \le (1 + \alpha) |v_S|_1 \leq (1+\alpha) m \left ( 2/(1-\rho) + s-2 \right ).
$
leading to a lower bound on the $\ell_1$ sensitivity:
$$
\frac{ s  |\Sigma v|_{\infty}} {|v|_1} \ge \frac{s}{(1+\alpha)(2/(1-\rho) + s-2)}.
$$
If $1- \rho = 1/s$, this bound is at least $1/3(1+\alpha)$, showing that $\ell_1$ sensitivity holds. However, the smallest eigenvalue is also $1-\rho = 1/s$, so the other regularity  properties (restricted eigenvalue, compatibility), fail to hold as $s \to \infty$.

\subsection{Proof of Proposition \ref{linear_operations} }
\label{operation_proofs}

For the first claim, note $(Z')^TX'v = Z^TX(Mv)$. If $v$ is any vector in the cone $C(s,\alpha)$, we have $Mv \in C(s', \alpha')$ by the cone-preserving property. Hence by the $\ell_q$ sensitivity of $X,Z$
$
s^{1/q} |n^{-1}Z^TX(Mv)|_{\infty}/|Mv|_q \ge \gamma.
$
Multiplying this by  $|Mv|_q \ge c |v|_q$ yields the $\ell_q$ sensitivity for $X', Z$.

For the second claim, we write  $(Z')^TX'v = MZ^TXv$. By the $\ell_q$ sensitivity of $X,Z$, for all $v \in C(s, \alpha)$,
$
s^{1/q} |n^{-1}Z^TXv|_{\infty}/|v|_q \ge \gamma.
$
Multiplying this by $n^{-1}|MZ^TXv|_{\infty} \ge cn^{-1}|Z^TXv| _{\infty}$ finishes the proof.

\bibliographystyle{spmpsci}      % mathematics and physical sciences
\bibliography{senior.thesis}  % name your BibTeX data base

\end{document}